\renewcommand*\env@matrix[1][c]{\hskip -\arraycolsep
  \let\@ifnextchar\new@ifnextchar
  \array{*\c@MaxMatrixCols #1}}
\theoremstyle{plain}  
\newtheorem{theorem}{Theorem}[section]
\newtheorem{lemma}[theorem]{Lemma}
\newtheorem{proposition}[theorem]{Proposition}
\theoremstyle{definition}
\theoremstyle{remark} 
\newtheorem{example}{Example}[section]
\newtheorem{remark}{Remark}[section]
\definecolor{Pink}{RGB}{225,0,127}
\title{\LARGE \bf Sum of squares certificates for stability of\\ planar, homogeneous, and switched systems}
 \author{Amir Ali Ahmadi and Pablo A. Parrilo \thanks{Amir Ali Ahmadi is with the Department of Operations Research and Financial Engineering at Princeton University. Pablo A. Parrilo is with the Department of Electrical Engineering and Computer Science at MIT.\newline Email: \texttt{a\_a\_a@princeton.edu}, \texttt{parrilo@mit.edu}.}
}
\begin{document}
\date{}
\maketitle
%

%



\maketitle

\begin{abstract}
%
We show that existence of a global polynomial Lyapunov function for a homogeneous polynomial vector field or a planar polynomial vector field (under a mild condition) implies existence of a polynomial Lyapunov
function that is a sum of squares (sos) and that the negative of its derivative is
also a sum of squares. This result is extended to show that such sos-based certificates of stability are guaranteed to exist for all stable switched linear systems. For this class of systems, we further show that if the derivative inequality of the Lyapunov function has an sos certificate, then the Lyapunov function itself is automatically a sum of squares. These converse results establish cases where semidefinite programming is guaranteed to succeed in finding proofs of Lyapunov inequalities. Finally, we demonstrate some merits of replacing the sos requirement on a polynomial Lyapunov function with an sos requirement on its top homogeneous component. In particular, we show that this is a weaker algebraic requirement in addition to being cheaper to impose computationally.

\end{abstract}


%


\section{Introduction}
Consider a continuous time dynamical system
\begin{equation}\label{eq:CT.dynamics}
\dot{x}=f(x),
\end{equation}
where $f:\mathbb{R}^n\rightarrow\mathbb{R}^n$ is a polynomial and
has an equilibrium at the origin, i.e., $f(0)=0$. When a
polynomial function $V(x):\mathbb{R}^n\rightarrow\mathbb{R}$ is
used as a candidate Lyapunov function for stability analysis of
system (\ref{eq:CT.dynamics}), conditions of Lyapunov's theorem
reduce to a set of polynomial inequalities. For instance, if
establishing global asymptotic stability of the origin is desired (see, e.g.,~\cite[Chap. 4]{Khalil:3rd.Ed} for a formal definition),
one would require a radially unbounded polynomial Lyapunov
function candidate to satisfy:
\begin{eqnarray}
V(x)&>&0\quad \forall x\neq0 \label{eq:V.positive} \\
\dot{V}(x)=\langle\nabla V(x),f(x)\rangle&<&0\quad \forall x\neq0.
\label{eq:Vdot.negative}
\end{eqnarray}
Here, $\dot{V}$ denotes the time derivative of $V$ along the
trajectories of (\ref{eq:CT.dynamics}), $\nabla V$ is the
gradient vector of $V$, and $\langle .,. \rangle$ is the standard
inner product in $\mathbb{R}^n$. In some other variants of the
analysis problem, e.g. if LaSalle's invariance principle is to be
used, or if the goal is to prove boundedness of trajectories of
(\ref{eq:CT.dynamics}), then the inequality in
(\ref{eq:Vdot.negative}) is replaced with
\begin{equation}\label{eq:Vdot.nonpositive}
\dot{V}(x)\leq 0 \quad \forall x.
\end{equation}
In either case, the problem arising from this analysis approach is
that even though polynomials of a given degree are finitely
parameterized, the computational problem of searching for a
polynomial $V$ satisfying inequalities of the type
(\ref{eq:V.positive}),(\ref{eq:Vdot.negative}),(\ref{eq:Vdot.nonpositive})
is intractable. In fact, even deciding if a given polynomial $V$
of degree four or larger satisfies (\ref{eq:V.positive}) is
NP-hard~\cite{nonnegativity_NP_hard},~\cite[Prop. 1]{AAA_ACC12_Cubic_Difficult}.

An approach pioneered in~\cite{PhD:Parrilo} and quite popular by
now is to replace the positivity or nonnegativity conditions by
the requirement of existence of a sum of squares (sos)
decomposition (see Section~\ref{sec:prelim} for a definition):
\begin{eqnarray}
V& \mbox{sos}\label{eq:V.SOS} \\
-\dot{V}=-\langle\nabla V,f\rangle& \mbox{sos}.
\label{eq:-Vdot.SOS}
\end{eqnarray}

Clearly, if a polynomial is a sum of squares of other polynomials, then it must be nonnegative. Moreover, it is well known that an sos decomposition constraint on a polynomial can be cast as a semidefinite programming (SDP) problem~\cite{sdprelax}, which can be solved efficiently.
\footnote{If the SDP resulting from (\ref{eq:V.SOS}) and (\ref{eq:-Vdot.SOS}) is strictly feasible, then any interior solution automatically satisfies (\ref{eq:V.positive})-(\ref{eq:Vdot.negative}); see, e.g.,~\cite[p. 41]{AAA_MS_Thesis}.} 



Over the last decade, Lyapunov analysis with sum of squares techniques has become a relatively well-established approach for a variety of problems in controls. Examples include stability analysis of switched and hybrid systems, design of nonlinear controllers, and formal verification of safety-critical systems, just to name a few; 
see, e.g.,~\cite{PositivePolyInControlBook},
\cite{AutControlSpecial_PositivePolys}, \cite{ControlAppsSOS}, \cite{PraP03}, \cite{PapP02}, and references therein.
%
%
%
Despite the wealth of research in this area, the literature by and large focuses on proposing the sum of squares constraints as a sufficient condition for the underlying Lyapunov inequalities, without studying their necessity. For example, even for the basic notion of global asymptotic stability (GAS), the following question is to the best of our knowledge open:


\noindent {\bf Problem 1:} Does existence of a polynomial Lyapunov function satisfying (\ref{eq:V.positive})-(\ref{eq:Vdot.negative}) imply existence of a polynomial Lyapunov function (of possibly higher degree) satisfying (\ref{eq:V.SOS})-(\ref{eq:-Vdot.SOS})?


We consider this question for polynomial vector fields, as well as switched systems, and provide a positive answer in some special cases. (We also study other related questions.) Before we outline our contributions, some remarks about the statement of Problem 1 are in order. 


First, we point out that between the two sos requirements in (\ref{eq:V.SOS})-(\ref{eq:-Vdot.SOS}), the condition in (\ref{eq:V.SOS}) can simply be met by squaring a polynomial Lyapunov function (see Lemma~\ref{lem:W=V^2}). By contrast, the condition in (\ref{eq:-Vdot.SOS}) is typically more challenging to ensure. Second, we remark that imposing the sos conditions in (\ref{eq:V.SOS})-(\ref{eq:-Vdot.SOS}) is not the only way to use the sos relaxation for proving GAS of (\ref{eq:CT.dynamics}). For example, even when these polynomials are not sos, one can multiply them by a positive polynomial (e.g., a power of $\sum x_i^2$) and the result may become sos and then certify the desired inequalities. In this paper, however, we are interested in knowing whether the sos conditions on a Lyapunov function and its derivative can be met just by increasing the degree of the Lyapunov function. This is a very basic question in our opinion and it is in fact how the sos relaxation is most commonly used in practice. Finally, since our interest is mainly in establishing GAS of (\ref{eq:CT.dynamics}), a natural question that comes before Problem 1 is the following: 
``Does global asymptotic stability of a polynomial vector field imply existence of a polynomial Lyapunov function satisfying (\ref{eq:V.positive})-(\ref{eq:Vdot.negative})?''
%
The answer to this question is in general negative. The interested reader can find explicit counterexamples in~\cite{Bacciotti.Rosier.Liapunov.Book},~\cite{AAA_MK_PP_CDC11_no_Poly_Lyap}.

\subsection{Contributions}

Many of the results in this paper can be seen as establishing cases where semidefinite programming is guaranteed to succeed in finding proofs of Lyapunov inequalities. More precisely, in Section~\ref{sec:converse.sos.results}, we
give a positive answer to Problem~1 in the case where the
vector field is homogeneous
(Theorem~\ref{thm:poly.lyap.then.sos.lyap}) or when it is planar and an additional mild assumption is met (Theorem~\ref{thm:poly.lyap.then.sos.lyap.PLANAR}). The general case remains open. The proofs of
these two theorems are quite simple and rely on powerful and relatively recent  Positivstellens\"atze due to Scheiderer
(Theorems~\ref{thm:claus} and~\ref{thm:claus.3vars}).

In Section~\ref{subsec:extension.to.switched.sys}, we extend these
results for 
stability analysis of switched linear systems. These are a widely-studied subclass of hybrid systems. Our result combined with a result of Mason et al.~\cite{switch.common.poly.Lyap} shows that if such a system is asymptotically stable under arbitrary switching, then it admits a common polynomial Lyapunov function that is sos and that the
negative of its derivative is also sos (Theorem~\ref{thm:converse.sos.switched.sys}). We also show that
for switched linear systems (both in discrete and continuous
time), if the inequality on the decrease condition of a Lyapunov
function is satisfied with a sum of squares certificate, then the Lyapunov
function itself is automatically a sum of squares
(Propositions~\ref{prop:switch.DT.V.automa.sos}
and~\ref{prop:switch.CT.V.automa.sos}). This statement, however, is shown to be false for nonlinear systems (Lemma~\ref{lem:V.autom.sos.not.true.NL}).

Finally, in Section~\ref{sec.h.o.t}, we demonstrate a related curious fact, that instead of requiring a candidate polynomial Lyapunov function to be sos, it is better to ask its top homogeneous component to be sos. We show that this still implies GAS (Proposition~\ref{prop:hot.implies.gas} and Lemma~\ref{lem:hot.is.no.more.conservative}), is a weaker algebraic requirement (Lemma~\ref{lem:hot.is.no.more.conservative}), introduces no conservative in dimension two, and is cheaper to impose. 

\subsection{Related literature}

In related work~\cite{Peet.exp.stability},~\cite{Peet.Antonis.converse.sos.journal}, Peet and Papachristodoulou study similar questions for the notion of exponential stability on compact sets. In~\cite{Peet.exp.stability}, Peet proves that exponentially
stable polynomial systems have polynomial Lyapunov functions on
bounded regions. In~\cite{Peet.Antonis.converse.sos.CDC},\cite{Peet.Antonis.converse.sos.journal},
Peet and Papachristodoulou provide a degree bound for this Lyapunov function (depending on decay rate of trajectories) and show that it can be made sos.
%
The question whether the inequality on the derivative of the Lyapunov function can also have an sos certificate is not studied in these references.

%
%
%
%
%

\section{Preliminaries}\label{sec:prelim}
Throughout the paper, we will be concerned with Lyapunov functions
that are (multivariate) polynomials. We say that a polynomial function $V:\mathbb{R}^n\rightarrow\mathbb{R}$ is \emph{nonnegative} if $V(x)\geq0$ for all
$x$, \emph{positive definite} if
$V(x)>0$ for all $x\neq 0$, \emph{negative definite} if $-V$ is positive definite, and \emph{radially unbounded} if $V(x)\rightarrow\infty$ as $||x||\rightarrow\infty$. A polynomial $V$ of degree $d$ is said to be
\emph{homogeneous} if it satisfies $V(\lambda x)=\lambda^d V(x)$
for any scalar $\lambda\in\mathbb{R}$. This condition holds if and
only if all monomials of $V$ have degree $d$. A homogeneous polynomial is also called a \emph{form}. The \emph{top homogeneous component} of a polynomial $p$ is the homogeneous polynomial formed by the collection of the highest order monomials of $p$.

We say that a polynomial $V$ is a \emph{sum of squares} (sos) if
$V=\sum_i^m q_i^2$ for some polynomials $q_i$. We do not present
here the SDP that decides if a given polynomial
is sos since it has already appeared in several places. The
unfamiliar reader is referred to~\cite{sdprelax}. If $V$ is sos,
then $V$ is nonnegative, though the converse is in general not true~\cite{Hilbert_1888},~\cite{Reznick}.

\section{Converse SOS Lyapunov theorems}\label{sec:converse.sos.results}

We start by observing that existence of a polynomial Lyapunov
function immediately implies existence of a Lyapunov function that
is a sum of squares.

\begin{lemma}\label{lem:W=V^2}
Given a polynomial vector field, suppose there exists
a polynomial Lyapunov function $V$ such that $V$ and
$-\dot{V}$ are positive definite. Then, there also exists a
polynomial Lyapunov function $W$ such that $W$ and $-\dot{W}$ are positive definite and $W$ is sos.
\end{lemma}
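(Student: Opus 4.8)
The plan is to exhibit a single explicit choice, namely the squared Lyapunov function $W := V^2$, and to verify directly that it inherits every required property while upgrading $V$ to a sum of squares. First I would dispose of the sos requirement on $W$ with no work at all: since $W$ is literally the square of one polynomial, it is a sum of squares by definition. The only cost is that the degree of $W$ is twice that of $V$. Next I would check positive definiteness of $W$. Because $V$ is positive definite, $V(x)>0$ for every $x\neq 0$, hence $W(x)=V(x)^2>0$ for every $x\neq 0$, so $W$ is positive definite. If $V$ is additionally radially unbounded (as a genuine Lyapunov function for GAS would be), then $V\to+\infty$ forces $V^2\to+\infty$, so $W$ is radially unbounded as well and remains a bona fide Lyapunov function.

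The key computation is the derivative of $W$ along trajectories. By the product rule, $\nabla W = 2V\,\nabla V$, so that
\[
\dot{W} = \langle \nabla W, f\rangle = 2V\,\langle \nabla V, f\rangle = 2V\,\dot{V}.
\]
Consequently $-\dot{W} = 2V\,(-\dot{V})$ is the pointwise product of the two positive definite polynomials $V$ and $-\dot{V}$. At any $x\neq 0$ both factors are strictly positive, so their product is strictly positive; thus $-\dot{W}$ is positive definite. This establishes that $W$ satisfies all three conclusions of the lemma.

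I do not anticipate any real obstacle here; the only point that requires even a moment's thought is the final observation that the product of two positive definite polynomials is again positive definite, which is immediate since positivity is verified pointwise. This lemma is essentially the formalization of the remark in the introduction that, between the two sos conditions, the one on the Lyapunov function itself can always be satisfied for free by squaring, whereas the genuinely hard condition is the sos requirement on the derivative. It is worth emphasizing that squaring does \emph{not} help with that second condition: in general $-\dot{W}=2V(-\dot{V})$ need not be a sum of squares even when $V$ and $-\dot{V}$ are, so the substantive converse results of the paper will require the stronger Positivstellensatz machinery rather than this elementary construction.
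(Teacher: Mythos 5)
Your proposal is correct and is exactly the paper's own proof: set $W=V^2$, note $W$ is sos as a perfect square, and observe that $W$ and $-\dot{W}=-2V\dot{V}$ are positive definite as (products of) positive definite polynomials. Your closing remark that this construction does not make $-\dot{W}$ sos matches the paper's parenthetical caveat as well.
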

\begin{proof}
Take $W=V^2$. Then $W$ and $-\dot{W}=-2V\dot{V}$ are positive definite and $W$ is sos (though $-\dot{W}$ may not be sos).
\end{proof}

We will next prove a result that guarantees the derivative of the
Lyapunov function will also satisfy the sos condition, though this
result is restricted to homogeneous systems.

A polynomial vector field $\dot{x}=f(x)$
is \emph{homogeneous} if all entries of $f$ are homogeneous
polynomials of the same degree, i.e., if all the monomials in all
the entries of $f$ have the same degree. Homogeneous systems are extensively studied in the control literature; see
e.g.~\cite{Stability_homog_poly_ODE}, \cite{Stabilize_Homog},~\cite{homog.feedback},~\cite{HomogHomog},~\cite{homog.systems}, \cite{Baillieul_Homog_geometry}, and references therein.



 A basic fact about
homogeneous vector fields is that for these systems the notions of
local and global stability are equivalent. Indeed, a homogeneous
vector field of degree $d$ satisfies $f(\lambda x)=\lambda^d f(x)$
for any scalar $\lambda$, and therefore the value of $f$ on the
unit sphere determines its value everywhere. It is also well-known
that an asymptotically stable homogeneous system admits a
homogeneous Lyapunov function~\cite{HomogHomog}.

We will use the following Positivstellens\"atze due to
Scheiderer to prove our converse sos Lyapunov theorem.

\begin{theorem}[Scheiderer,~\cite{Claus_Hilbert17}] \label{thm:claus}
Given any two positive definite homogeneous polynomials $p$ and
$q$, there exists an integer $k$ such that $pq^k$ is a sum of
squares.
\end{theorem}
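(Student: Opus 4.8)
The plan is to reduce the statement to the well-understood special case in which the multiplier is the canonical form $s:=\sum_{i=1}^n x_i^2$, and then to confront the genuinely harder problem of trading this canonical denominator for the arbitrary positive definite form $q$. First I would record the elementary observations that frame the problem: since $p$ and $q$ are positive definite they must have even degree (an odd-degree form takes both signs), so $pq^k$ is a form of even degree and an sos decomposition is at least dimensionally possible. Moreover, both $p$ and $q$ are continuous and strictly positive on the compact unit sphere $S^{n-1}=\{s=1\}$, so there $p>0$ and $0<m\le q\le M$; by homogeneity every $pq^k$ is itself positive definite on $\mathbb{R}^n\setminus\{0\}$.

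For the special multiplier $s$, the key classical input is Reznick's theorem (a homogeneous, even refinement of P\'olya's theorem): any positive definite form $f$ satisfies $s^N f$ sos for $N$ large. Thus $s^a p$ and $s^b q$ are sos for suitable $a,b$; call them $\sigma_p$ and $\sigma_q$. This already yields, for every $k$, the identity
\[
 p\,q^k\,s^{a+bk} \;=\; (s^a p)\,(s^b q)^k \;=\; \sigma_p\,\sigma_q^{\,k},
\]
which is a sum of squares. Hence $pq^k$ becomes sos after multiplication by a power of $s$ — but this is exactly what Reznick's theorem already guarantees directly for the positive definite form $pq^k$, so on its own it is circular and does not produce the desired multiplier $q^k$. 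One might hope instead to imitate Reznick's proof with $q$ in place of $s$, but the combinatorial coefficient-positivity at the heart of P\'olya's theorem is special to the simplex/sphere structure of $\sum x_i$ and $\sum x_i^2$ and does not transfer to a general $q$, which is another way to see why the elementary route stalls.

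The real content, therefore, is removing the denominator $s$ and replacing it by $q$, and this is where I expect the main obstacle to lie. The approach I would take is to pass to the projective (compact) setting and invoke a local-global principle for sums of squares of Scheiderer type: a form that is nonnegative on a projective real variety and satisfies the appropriate local sos conditions at every real point becomes, after multiplication by a suitable positive form, a global sum of squares. The decisive simplification in our situation is that $q$ is strictly positive away from the origin, so the hypersurface $\{q=0\}$ contains no real projective points; consequently all the local obstructions that ordinarily block the passage from \emph{positive} to \emph{sos} are vacuous, and $q$ is invertible on the relevant compactification. This is precisely what allows $q^k$ itself to serve as the clearing denominator rather than $s^N$. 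Establishing that positive-definiteness of $q$ trivializes every local condition and then upgrading this to a global sos certificate with denominator exactly $q^k$ is the crux, and it is the step that requires Scheiderer's machinery rather than the elementary manipulations above.
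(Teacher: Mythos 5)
First, a point of comparison that matters here: the paper contains no proof of Theorem~\ref{thm:claus}. It is quoted from Scheiderer~\cite{Claus_Hilbert17} and used downstream as a black box (in Theorem~\ref{thm:poly.lyap.then.sos.lyap} and Proposition~\ref{prop:switch.poly.then.sos.poly}), just as Theorem~\ref{thm:claus.3vars} is quoted from~\cite{Claus_3vars_sos}. So your proposal cannot be measured against an argument in the paper; it has to stand on its own as a proof of Scheiderer's theorem, and it does not.

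Your preparatory material is fine: positive definite forms have even degree, Reznick's uniform-denominator theorem gives $s^N f$ sos for positive definite $f$ with $s=\sum_i x_i^2$, and the identity $p\,q^k\,s^{a+bk}=(s^ap)(s^bq)^k$ is a correct but unhelpful certificate, as you note yourself. The genuine gap is that the final paragraph, where the theorem would actually have to be proved, proves nothing: it appeals to ``a local-global principle for sums of squares of Scheiderer type,'' that is, it invokes as a black box the very machinery whose conclusion is the statement at hand. As a proof of Scheiderer's theorem this is circular. Moreover, the step you wave at (``upgrading to a global sos certificate with denominator exactly $q^k$'') is not routine even granting the known compactness-based representation theorems. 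The natural way to make your projective picture precise is to pass to the finitely generated $\mathbb{R}$-algebra $A$ of degree-zero fractions $g/q^j$ (regular functions on $\mathbb{P}^{n-1}$ with the hypersurface $\{q=0\}$ removed; its real points are all of $\mathbb{P}^{n-1}(\mathbb{R})$, hence compact, precisely because $q$ is positive definite) and apply Schm\"udgen's Positivstellensatz there. But $p$ only defines an element of $A$ with denominator a power of $q$ when $\deg q$ divides $\deg p$; otherwise one is forced to use surrogates such as $p^{\deg q}/q^{\deg p}$ or $p\,s^a/q^b$, and clearing denominators then certifies that $p^{\deg q}q^m$ or $p\,s^a q^m$ is sos --- not that $pq^k$ is. Eliminating exactly these spurious factors is where the real content of the theorem lies, and neither your sketch nor this naive implementation of it addresses that step.
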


\begin{theorem}\label{thm:poly.lyap.then.sos.lyap}
Given a homogeneous polynomial vector field, suppose there exists
a homogeneous polynomial Lyapunov function $V$ such that $V$ and
$-\dot{V}$ are positive definite. Then, there also exists a
homogeneous polynomial Lyapunov function $W$ such that $W$ and $-\dot{W}$ are both sos (and positive definite).
\end{theorem}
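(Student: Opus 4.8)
The plan is to take $W$ to be an even power of $V$, so that the sos property of $W$ is automatic, and then to arrange the exponent so that the sos property of $-\dot{W}$ follows from a single application of Scheiderer's Theorem~\ref{thm:claus}. First I would record the elementary closure facts I will lean on: a product of positive definite homogeneous polynomials is again positive definite and homogeneous; any even power $V^{2m}=(V^m)^2$ is automatically sos (this is the mechanism already used in Lemma~\ref{lem:W=V^2}); and a positive scalar multiple of an sos polynomial is sos.

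The key step is to feed Scheiderer's theorem the right pair of forms. I would set $p:=V\cdot(-\dot{V})$ and $q:=V^2$. Both are positive definite homogeneous polynomials, so Theorem~\ref{thm:claus} supplies an integer $k$ for which $p\,q^{k}=V^{2k+1}(-\dot{V})$ is a sum of squares. The reason for choosing $q=V^2$ rather than $q=V$ is to force the exponent of $V$ in this sos product to be \emph{odd}, namely $2k+1$; this is precisely the parity needed to match the derivative of an even power of $V$.

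Next I would define $W:=V^{2k+2}=(V^{k+1})^2$, which is homogeneous and sos by inspection. Differentiating along the vector field via the chain rule gives $\dot{W}=\langle\nabla W,f\rangle=(2k+2)V^{2k+1}\dot{V}$, so that $-\dot{W}=2(k+1)\,V^{2k+1}(-\dot{V})=2(k+1)\,p\,q^{k}$ is a positive scalar multiple of the sos polynomial produced above, hence itself sos. Finally I would verify that $W$ and $-\dot{W}$ are positive definite: since $V$ is positive definite we have $V^{2k+1}>0$ and $V^{2k+2}>0$ away from the origin, and multiplying by the positive definite $-\dot{V}$ keeps $-\dot{W}$ positive definite; as a positive definite form is coercive, $W$ is radially unbounded and is therefore a genuine global Lyapunov function.

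There is no serious obstacle once Theorem~\ref{thm:claus} is available; the only real subtlety — and the single idea that makes the construction go through — is the parity bookkeeping. Taking $W$ to be an even power of $V$ trivializes the requirement that $W$ be sos, but it forces $-\dot{W}$ to carry an odd power of $V$, and choosing $q=V^2$ (so that $p\,q^{k}$ has an odd $V$-exponent) is exactly what aligns the Scheiderer certificate with $-\dot{W}$. Had I instead applied the theorem with $q=V$, the resulting exponent could have the wrong parity and the certificate would not directly certify $-\dot{W}$.
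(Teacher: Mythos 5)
Your proof is correct and is essentially the same as the paper's: both apply Scheiderer's Theorem~\ref{thm:claus} to the pair $\bigl(V\cdot(-\dot{V}),\,V^2\bigr)$ (the paper just carries the harmless factor $-2V\dot{V}$ instead of $V(-\dot{V})$) and then take $W=V^{2k+2}$ so that $-\dot{W}$ becomes a positive multiple of the certified sos product. Your remark on the parity bookkeeping (choosing $q=V^2$ so the certificate carries the odd power $V^{2k+1}$) is exactly the mechanism implicit in the paper's computation $-\dot{W}=-(k+1)2V^{2k}V\dot{V}$.
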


\begin{proof}
Observe that $V^2$ and $-2V\dot{V}$ are both positive definite and
homogeneous polynomials. (Homogeneous polynomials are closed under products and the gradient of a homogeneous polynomial has homogeneous entries.) Applying Theorem~\ref{thm:claus} to these
two polynomials, we conclude that there exists an integer $k$ such
that $(-2V\dot{V})(V^2)^k$ is sos. Let $$W=V^{2k+2}.$$ Then, $W$
is clearly sos since it is a perfect even power. Moreover,
$$-\dot{W}=-(2k+2)V^{2k+1}\dot{V}=-(k+1)2V^{2k}V\dot{V}$$
is also sos by the previous claim. Positive definiteness of $W$ and $-\dot{W}$ is clear from the construction.
\end{proof}

The polynomial $W$ constructed in the proof above has higher degree than the polynomial $V$, though it seems difficult to construct vector fields for which this degree increase is necessary. In~\cite{AAA_ACC12_Cubic_Difficult}, we show that unless P=NP, there cannot be a polynomial time algorithm to test whether a cubic homogeneous vector field is globally asymptotically stable, or even to test whether it admits a quadratic Lyapunov function. Hence, just from complexity considerations, we expect there to exist GAS cubic vector fields (of possibly high dimensions) that admit a degree-2 polynomial Lyapunov function satisfying (\ref{eq:V.positive})-(\ref{eq:Vdot.negative}), but for which the minimum degree of a polynomial satisfying the sos constraints in (\ref{eq:V.SOS})-(\ref{eq:-Vdot.SOS}) is arbitrarily high. One difficulty with explicitly constructing such examples stems from non-uniqueness of Lyapunov functions. This makes it insufficient to simply engineer $V$ or $-\dot{V}$ to be ``positive but not sos''; one needs to show that \emph{any} polynomial Lyapunov function of a given degree fails to satisfy the sos constraints in (\ref{eq:V.SOS})-(\ref{eq:-Vdot.SOS}).

Next, we develop a theorem that removes the homogeneity
assumption from the vector field in Theorem~\ref{thm:poly.lyap.then.sos.lyap}, but instead is restricted to
vector fields on the plane. For this, we need another result of
Scheiderer.

\begin{theorem}[{Scheiderer,~\cite[Cor. 3.12]{Claus_3vars_sos}}] \label{thm:claus.3vars}
Let $p\mathrel{\mathop:}=p(x_1,x_2,x_3)$ and
$q\mathrel{\mathop:}=q(x_1,x_2,x_3)$ be two homogeneous
polynomials in three variables, with $p$ positive semidefinite and
$q$ positive definite. Then, there exists an integer $k$ such that
$pq^k$ is a sum of squares.
\end{theorem}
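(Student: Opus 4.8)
The statement is a deep Positivstellensatz, and the natural route is to recast the homogeneous three-variable problem as a question about sums of squares of regular functions on a \emph{compact real surface}, and then to invoke the local-global machinery that Scheiderer develops for surfaces. First I would record the harmless parity reductions: a positive semidefinite form has even degree and a positive definite form has even degree, so $pq^k$ is an even-degree form for every $k$ and the question ``is $pq^k$ a sum of squares?'' is well posed. Next, since $q$ is positive definite it is strictly positive on $\mathbb{R}^3\setminus\{0\}$, hence a unit on the two-sphere $S^2$ and on the real projective plane $X=\mathbb{P}^2(\mathbb{R})$. I would then translate the target ``$pq^k\in\Sigma^2$ for some $k$'' into the saturated statement that the restriction $\bar p$ of $p$ is a sum of squares in the ring obtained from $A=\mathbb{R}[x_1,x_2,x_3]/(x_1^2+x_2^2+x_3^2-1)$ by inverting $\bar q$; the factor $q^k$ (together with, if needed, a power of the positive definite norm form $x_1^2+x_2^2+x_3^2$, which is itself a sum of squares) is exactly what clears the denominators introduced by this localization. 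The point of the reduction is that $X$ and $S^2$ are compact real varieties of \emph{dimension two}, the regime in which sums of squares are well behaved.

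The engine of the proof is Scheiderer's local-global principle for sums of squares on surfaces. Its content, specialized to the present setting, is that a regular function $f$ on a compact real surface that is nonnegative on the real points lies in the sum-of-squares module provided that at every real zero $a$ of $f$ the germ of $f$ is a sum of squares in the completed local ring $\widehat{\mathcal{O}}_{X,a}$. I would therefore reduce the global claim to a purely local condition at the real zeros of $\bar p$, which are the images of the real zeros of the form $p$. Away from these zeros $\bar p$ is strictly positive, so no local obstruction arises there and the contribution is (locally) a unit times a square.

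The remaining, and genuinely hard, step is the local analysis at a zero $a$ of $\bar p$. Here $\widehat{\mathcal{O}}_{X,a}\cong\mathbb{R}[[u,v]]$ is a two-dimensional regular local ring, and one must show that the nonnegative germ of $\bar p$ is a sum of squares of power series. This is where the restriction to \emph{three} homogeneous variables is decisive: it is precisely the two-dimensionality of this local ring that forces the vanishing of the obstruction to local sum-of-squares representability. In three or more local variables (four or more homogeneous variables) Positivstellens\"atze of this multiplier form genuinely fail, which is consistent with the fact that the all-dimensions result Theorem~\ref{thm:claus} must assume $p$ positive \emph{definite}. I would organize the local argument by the order of vanishing of $p$, using that $p\geq 0$ forces vanishing to even order along its real zero set, and then assemble the local representations into a single global one by the archimedean patching that compactness of $X$ supplies; a single power $q^k$, with $q$ a unit on $X$, clears all the patch denominators simultaneously.

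The main obstacle is exactly this local-to-global passage: establishing (or correctly invoking) the surface local-global principle and verifying the local sum-of-squares condition at the zeros of $p$. Everything else---the parity bookkeeping, the localization that converts the multiplier $q^k$ into inverting a unit, and the strict positivity away from the zero set---is routine once the compact two-dimensional picture is in place. I expect no shortcut through the already-proven Theorem~\ref{thm:claus}, since that result requires $p$ to have no zeros, whereas the entire difficulty here is concentrated precisely at the zeros of the positive semidefinite form $p$.
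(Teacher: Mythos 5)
This statement is not proved in the paper at all: it is imported verbatim as Scheiderer's result (Cor.~3.12 of the cited reference) and used as a black box in the proof of Theorem~\ref{thm:poly.lyap.then.sos.lyap.PLANAR}. So there is no internal proof to compare against; what you have written is a reconstruction of the strategy of Scheiderer's original paper, and as a roadmap it is faithful: pass to the sphere or $\mathbb{P}^2(\mathbb{R})$, where the positive definite form $q$ becomes a unit so that the multiplier $q^k$ amounts to a localization/denominator-clearing device; invoke the archimedean local-global principle for sums of squares on compact surfaces to reduce to the real zeros of $p$; and settle the local question in the completed local ring $\widehat{\mathcal{O}}_{X,a}\cong\mathbb{R}[[u,v]]$, where two-dimensionality is indeed the decisive hypothesis. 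Your observation that the result cannot be deduced from Theorem~\ref{thm:claus}, because that theorem requires $p$ to be positive definite while the entire difficulty here sits at the zeros of $p$, is also correct and matches why the paper needs both statements separately.

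As a proof, however, the proposal has two genuine gaps. First, both load-bearing ingredients are named rather than established: the local-global principle on surfaces and the fact that a nonnegative germ in $\mathbb{R}[[u,v]]$ is a sum of squares are themselves deep theorems of Scheiderer (the local statement has a classical analytic antecedent due to Bochnak and Risler, where psd germs in two variables are sums of two squares); saying the local obstruction ``is forced to vanish'' by two-dimensionality asserts the theorem rather than proving it. Second, and more concretely, your plan implicitly treats the real zeros of $p$ as a well-separated finite set to be patched over, but a positive semidefinite ternary form can vanish along a whole real curve --- take $p=f^2$ with $f$ indefinite --- so the versions of the local-global principle stated for finitely many zeros do not apply directly, and ``organizing by order of vanishing'' plus even-order vanishing along the zero set does not by itself bridge this; handling non-isolated zeros is precisely part of what the cited Cor.~3.12 accomplishes. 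In short: accurate as a map of the original argument, but the map leaves the hard terrain uncrossed, which is consistent with the paper's own decision to cite the result rather than prove it.
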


\begin{theorem}\label{thm:poly.lyap.then.sos.lyap.PLANAR}
Given a (not necessarily homogeneous) polynomial vector field in
two variables, suppose there exists a positive definite polynomial
Lyapunov function $V,$ with $-\dot{V}$ positive definite, and such
that the top homogeneous component of $V$ has no zeros\footnote{This
requirement is only slightly stronger than the requirement of
radial unboundedness, which is imposed on $V$ by Lyapunov's
theorem anyway. See Section~\ref{sec.h.o.t}.}. Then, there also exists a polynomial Lyapunov
function $W$ such that $W$ and $-\dot{W}$ are both sos (and positive definite).
\end{theorem}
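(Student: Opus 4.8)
The plan is to imitate the proof of Theorem~\ref{thm:poly.lyap.then.sos.lyap}, but since the vector field is no longer homogeneous I would pass to three homogeneous variables and invoke the ternary Positivstellensatz (Theorem~\ref{thm:claus.3vars}) in place of Theorem~\ref{thm:claus}. For a bivariate polynomial $p$ of degree $m$, write its homogenization as $\widehat{p}(x_1,x_2,x_3)=x_3^{m}\,p(x_1/x_3,x_2/x_3)$, a ternary form; recall that homogenization respects products, i.e.\ $\widehat{pq}=\widehat{p}\,\widehat{q}$, and that a ternary form is sos iff its dehomogenization (set $x_3=1$) is sos. The two objects I would feed into Scheiderer's theorem are $\widehat{P}$, where $P:=V\cdot(-\dot V)$, playing the role of the positive \emph{semi}definite factor, and a suitably shifted version of $\widehat{V^2}$, playing the role of the positive \emph{definite} multiplier. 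The candidate Lyapunov function will be
\begin{equation*}
W:=(V^2+c)^{k+1}-c^{k+1},
\end{equation*}
for a fixed constant $c>0$ and an integer $k$ produced by the theorem.

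\noindent Before applying Theorem~\ref{thm:claus.3vars} I would record the needed definiteness facts. Since any positive definite polynomial has even degree and positive semidefinite top homogeneous component (otherwise it would take negative values in some direction as the argument grows), both $V$ and $-\dot V$ have even degree, so $P=V\cdot(-\dot V)$ has even degree; hence $\widehat{P}\ge 0$ for $x_3\neq 0$, and on $\{x_3=0\}$ it equals $V_{\mathrm{top}}\cdot(-\dot V)_{\mathrm{top}}$, a product of a positive definite form (by hypothesis $V_{\mathrm{top}}$ has no zeros) and a positive semidefinite one, hence $\ge 0$. Thus $\widehat{P}$ is positive semidefinite, as required for the role of $p$.

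\noindent The heart of the argument, and the step I expect to be the main obstacle, is manufacturing a positive definite multiplier. The naive choice $\widehat{V^2}$ fails: because $V(0)=0$, every homogenization of a Lyapunov-type quantity vanishes identically on the $x_3$-axis $\{(0,0,x_3)\}$ (the image of the origin), so $\widehat{V^2}$ is only positive semidefinite and Theorem~\ref{thm:claus.3vars} does not apply. The device to repair this is to shift \emph{before} homogenizing: $\widehat{V^2+c}=\widehat{V^2}+c\,x_3^{2d}$, where $d=\deg V$. Here $c\,x_3^{2d}$ is exactly the term that is strictly positive on the $x_3$-axis and thus fills in the offending zero, while the hypothesis that $V_{\mathrm{top}}$ has no zeros guarantees that $\widehat{V^2}=(\widehat V)^2$ restricts to $V_{\mathrm{top}}^2>0$ on the hyperplane at infinity $\{x_3=0\}$. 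A short case check (on the axis, on $\{x_3=0\}$, and elsewhere) then shows $q:=\widehat{V^2+c}$ is positive definite. This is precisely where the planar restriction (three homogeneous variables) and the top-component assumption are both consumed.

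\noindent With $\widehat{P}$ positive semidefinite and $q$ positive definite, Theorem~\ref{thm:claus.3vars} yields an integer $k$ with $\widehat{P}\,q^{k}$ sos. By multiplicativity of homogenization, $\widehat{P}\,q^{k}=\widehat{P\,(V^2+c)^{k}}$, so dehomogenizing gives that $V\,(-\dot V)\,(V^2+c)^{k}$ is sos in two variables. Finally I would verify the two conclusions for $W$ above: expanding binomially, $W=\sum_{j=1}^{k+1}\binom{k+1}{j}c^{\,k+1-j}\,(V^{j})^2$ is a sum of squares (and is positive definite and radially unbounded), while $-\dot W=2(k+1)(V^2+c)^{k}\,V\,(-\dot V)$ is a positive multiple of the polynomial just shown to be sos, hence is sos (and positive definite). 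This completes the plan.
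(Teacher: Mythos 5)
Your proposal is correct and follows essentially the same route as the paper's proof: homogenize to ternary forms, apply Scheiderer's Theorem~\ref{thm:claus.3vars} to a positive semidefinite factor built from $V\cdot(-\dot V)$ and a positive definite multiplier obtained by a constant shift (needed because any homogenization of a Lyapunov-type quantity vanishes on the image of the origin), then dehomogenize and take a power of the shifted function as $W$. The only difference is where the shift is placed --- the paper uses $\tilde V = V+1$ and $W=(V+1)^{2k+2}$, which forces the closing remark that $W$ merely attains its minimum (equal to $1$) at the origin, whereas your $W=(V^2+c)^{k+1}-c^{k+1}$ vanishes at the origin and is a genuine positive definite sos Lyapunov function, a small but clean refinement.
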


\begin{proof}
Let $\tilde{V}=V+1$. So, $\dot{\tilde{V}}=\dot{V}$. Let us denote the degrees
of $\tilde{V}$ and $\dot{\tilde{V}}$ by $d_1$ and $d_2$ respectively. Consider the
(non-homogeneous) polynomials $\tilde{V}^2$ and
$-2\tilde{V}\dot{\tilde{V}}$ in the variables
$x\mathrel{\mathop:}=(x_1,x_2)$. Note that
$\tilde{V}^2$ is nowhere zero and $-2\tilde{V}\dot{\tilde{V}}$ is
only zero at the origin. Our first step is to homogenize these
polynomials by introducing a new variable $y$. Observing that the
homogenization of products of polynomials equals the product of
their homogenizations, we obtain the following two trivariate forms:
\begin{equation}\label{eq:V^2.homoegenized}
y^{2d_1}\tilde{V}^2(\textstyle{\frac{x}{y}}),
\end{equation}
\begin{equation}\label{eq:-2V.Vdot.homogenized}
-2y^{d_1}y^{d_2}\tilde{V}(\textstyle{\frac{x}{y}})\dot{\tilde{V}}(\textstyle{\frac{x}{y}}).
\end{equation}
Since by assumption the highest order term of $V$ has no zeros,
the form in (\ref{eq:V^2.homoegenized}) is positive definite. The
form in (\ref{eq:-2V.Vdot.homogenized}), however, is only positive
semidefinite. In particular, since $\dot{\tilde{V}}=\dot{V}$ has
to vanish at the origin, the form in
(\ref{eq:-2V.Vdot.homogenized}) has a zero at the point
$(x_1,x_2,y)=(0,0,1)$. Nevertheless, since
Theorem~\ref{thm:claus.3vars} allows for positive semidefiniteness
of one of the two forms, by applying it to the forms in
(\ref{eq:V^2.homoegenized}) and (\ref{eq:-2V.Vdot.homogenized}),
we conclude that there exists an integer $k$ such that
\begin{equation}\label{eq:-2V.Vdot.homog.*.V^2.homog^k}
-2y^{d_1(2k+1)}y^{d_2}\tilde{V}(\textstyle{\frac{x}{y}})\dot{\tilde{V}}(\textstyle{\frac{x}{y}})\tilde{V}^{2k}(\textstyle{\frac{x}{y}})
\end{equation}
is sos. Let $W=\tilde{V}^{2k+2}.$ Then, $W$ is clearly sos.
Moreover,
$$-\dot{W}=-(2k+2)\tilde{V}^{2k+1}\dot{\tilde{V}}=-(k+1)2\tilde{V}^{2k}\tilde{V}\dot{\tilde{V}}$$
is also sos because this polynomial is obtained from
(\ref{eq:-2V.Vdot.homog.*.V^2.homog^k}) by setting
$y=1$. Positive definiteness of $W$ and $-\dot{W}$ is again clear from the construction. Note that while $W$ does not vanish at the origin, it achieves its minimum there, and hence provides a proof of asymptotic stability.

\end{proof}

The polynomial $W$ constructed in the proof above again has higher degree than the polynomial $V$. The example below shows that such a degree increase is sometimes necessary.

\begin{theorem}[see~\cite{AAA_PP_CDC11_converseSOS_Lyap}]
The vector field
\begin{equation} \nonumber
\begin{array}{lll}
\dot{x_{1}}&=&-x_1^3x_2^2+2x_1^3x_2-x_1^3+4x_1^2x_2^2-8x_1^2x_2+4x_1^2 \\
\ &\ &-x_1x_2^4+4x_1x_2^3-4x_1+10x_2^2
\\
\dot{x_{2}}&=&-9x_1^2x_2+10x_1^2+2x_1x_2^3-8x_1x_2^2-4x_1-x_2^3
\\
\ &\ &+4x_2^2-4x_2
\end{array}
\end{equation}
admits a degree-2 polynomial Lyapunov function that proves its global asymptotic stability. However, the minimum degree of a polynomial Lyapunov function that satisfies the sos constraints in (\ref{eq:V.SOS})-(\ref{eq:-Vdot.SOS}) is equal to 4.
\end{theorem}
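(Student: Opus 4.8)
The plan is to split the statement into three parts: exhibiting a degree-$2$ Lyapunov function that proves GAS, ruling out \emph{every} sos-certified Lyapunov function of degree $2$, and exhibiting one of degree $4$. For the first part I would propose the candidate $V(x)=\tfrac12(x_1^2+x_2^2)$ and compute $\dot V=x_1\dot x_1+x_2\dot x_2$ along the given field; this should collapse to $\dot V(x)=-M(x_1-1,x_2-1)$, where $M(x_1,x_2)=x_1^4x_2^2+x_1^2x_2^4-3x_1^2x_2^2+1$ is the Motzkin form. Nonnegativity of $M$, hence $\dot V\le 0$, follows from the arithmetic-geometric mean inequality applied to $x_1^4x_2^2$, $x_1^2x_2^4$, $1$. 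Since $V$ is radially unbounded, GAS then follows from LaSalle's invariance principle (\cite[p.\ 128]{Khalil:3rd.Ed}) once I check that $\dot V$ vanishes only at $(0,0),(2,0),(0,2),(2,2)$ and that the vector field is nonzero at the three nonzero points, so that no trajectory can be trapped in $\{\dot V=0\}\setminus\{0\}$.

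The difficult half is the lower bound, where I must exclude all degree-$2$ candidates rather than a single one. Two reductions make this tractable. First, an sos polynomial has even degree, so odd degrees are automatically excluded and it suffices to defeat degree $2$ and exhibit degree $4$. Second, a degree-$2$ Lyapunov function proving GAS must vanish together with its gradient at the unique equilibrium (the origin), hence is a pure quadratic form $U(x)=c_1x_1^2+c_2x_1x_2+c_3x_2^2$. The crux is then to show that the \emph{only} quadratic form with $-\dot U\ge 0$ is a positive multiple of $\tfrac12(x_1^2+x_2^2)$. I would prove this by elementary pinning: normalizing $c_1=1$ by positive scaling, the evaluations $-\dot U(0,2)=-80c_2\ge 0$ and $-\dot U(2,2)=288(c_3-1)\ge 0$ give $c_2\le 0$ and $c_3\ge 1$; then I restrict to the line $x_2=1$ and study $-\dot U(x_1,1)$, whose leading term is the cubic $-c_2x_1^3$. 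Its behavior as $x_1\to-\infty$ together with $c_2\le 0$ forces $c_2=0$, after which the new leading term $(2-2c_3)x_1^2$ forces $c_3\le 1$; hence $c_1=1,\,c_2=0,\,c_3=1$. Since $-\dot U$ sos implies $-\dot U\ge 0$, any degree-$2$ sos-feasible Lyapunov function must coincide with $V$ up to scaling, and for it $-\dot V=M(x_1-1,x_2-1)$ is the shifted Motzkin form, which, being an affine image of Motzkin's form, is not a sum of squares (\cite{Reznick}). This contradiction rules out degree $2$.

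For the degree-$4$ half I would exhibit the explicit quartic $W$ returned by the semidefinite program and certify that both $W$ and $-\dot W$ are sos by displaying the corresponding Gram-matrix (or explicit sum-of-squares) decompositions, reducing the claim to a finite linear-algebra verification. The genuinely hard and conceptual step is the uniqueness claim of the second paragraph: because Lyapunov functions are highly non-unique, one cannot conclude from a single failing example, and the leading-term analysis of $-\dot U$ restricted to well-chosen lines is precisely what converts the global inequality $-\dot U\ge 0$ into the rigid identification $U\propto x_1^2+x_2^2$. Everything else is either a direct computation or an appeal to the classical fact that the Motzkin polynomial is nonnegative but not a sum of squares.
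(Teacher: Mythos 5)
Your proposal is correct and follows essentially the same route as the paper's own argument: the quadratic $V=\tfrac12(x_1^2+x_2^2)$ whose derivative is the negated shifted Motzkin polynomial combined with LaSalle's invariance principle, the pinning argument via $-\dot{U}(0,2)$, $-\dot{U}(2,2)$, and the line $x_2=1$ showing that $V$ is (up to positive scaling) the \emph{only} quadratic form with $-\dot{U}\geq 0$, and an explicit SDP-generated quartic with Gram-matrix certificates for the degree-$4$ upper bound. The only additions you make---justifying that a degree-$2$ Lyapunov function must be a pure quadratic form and ruling out odd degree $3$ because sos polynomials have even degree---are small completions of steps the paper leaves implicit.
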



\subsection{SOS certificates for stability of switched linear
systems}\label{subsec:extension.to.switched.sys}

The result of Theorem~\ref{thm:poly.lyap.then.sos.lyap} extends in a
straightforward manner to Lyapunov analysis of switched systems.
In particular, we are interested in the highly-studied problem of establishing stability of arbitrary switched linear systems:
\begin{equation}\label{eq:switched.linear.system}
\dot{x}=A_i x, \quad i\in\{1,\ldots,m\},
\end{equation}
$A_i\in\mathbb{R}^{n\times n}$. We assume the minimum dwell time
of the system (i.e., the minimum time between two consecutive
switches) is bounded away from zero. This guarantees that the
solutions of (\ref{eq:switched.linear.system}) are well-defined.
The (global) asymptotic stability under arbitrary switching (ASUAS) of system (\ref{eq:switched.linear.system}) is equivalent to
asymptotic stability of the linear differential inclusion
\begin{equation}\nonumber
\dot{x}\in co\{A_i\}x, \quad i\in\{1,\ldots,m\},
\end{equation}
where $co$ here denotes the convex hull.
%
%
A common approach for
analyzing stability of these systems is to use the sos
technique to search for a common polynomial Lyapunov
function~\cite{PraP03},\cite{Chest.et.al.sos.robust.stability}. We
will prove the following result.

\begin{theorem}\label{thm:converse.sos.switched.sys}
The switched linear system in (\ref{eq:switched.linear.system}) is
asymptotically stable under arbitrary switching if and only if
there exists a common homogeneous polynomial Lyapunov function $W$
such that
\begin{equation}\nonumber
\begin{array}{rl}
W & \mbox{sos} \\
-\dot{W}_i=-\langle \nabla W(x), A_ix\rangle & \mbox{sos},
\end{array}
\end{equation}
for $i=1,\ldots, m$, where the polynomials $W$ and $-\dot{W}_i$
are all positive definite.
\end{theorem}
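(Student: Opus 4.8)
The plan is to prove Theorem~\ref{thm:converse.sos.switched.sys} by reducing it to the homogeneous case already handled in Theorem~\ref{thm:poly.lyap.then.sos.lyap}, so that the main work is simply to produce a \emph{single} homogeneous polynomial Lyapunov function valid for all $m$ modes simultaneously. The reverse direction (``if'') is immediate: if such $W$ exists with each $-\dot W_i = -\langle \nabla W, A_i x\rangle$ positive definite, then $W$ is a common Lyapunov function decreasing along every mode, hence along any convex combination $\sum_i \lambda_i A_i x$ with $\lambda_i \ge 0$, $\sum_i \lambda_i = 1$, because $\langle \nabla W, (\sum_i \lambda_i A_i) x\rangle = \sum_i \lambda_i \dot W_i < 0$ for $x\neq 0$. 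This certifies asymptotic stability of the differential inclusion $\dot x \in \mathrm{co}\{A_i\}x$, which is equivalent to ASUAS of the switched system.

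For the forward (``only if'') direction, the key external input is the cited result of Mason et al.~\cite{switch.common.poly.Lyap}, which I would invoke to assert that ASUAS of~(\ref{eq:switched.linear.system}) implies existence of a \emph{common homogeneous polynomial} Lyapunov function $V$ that is positive definite and whose derivative along each mode, $\dot V_i = \langle \nabla V, A_i x\rangle$, is negative definite. The plan is to then run the construction of Theorem~\ref{thm:poly.lyap.then.sos.lyap} on this common $V$, but applying Scheiderer's Theorem~\ref{thm:claus} $m$ times rather than once. Concretely: each pair $(V^2,\, -2V\dot V_i)$ consists of two positive definite homogeneous polynomials, so for each $i$ there is an integer $k_i$ with $(-2V\dot V_i)(V^2)^{k_i}$ sos. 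Taking $k = \max_i k_i$ and $W = V^{2k+2}$, I would check that for every $i$,
\begin{equation}\nonumber
-\dot W_i = -(2k+2)V^{2k+1}\dot V_i = -(k+1)\,2 V^{2(k-k_i)}\,\bigl[(-2V\dot V_i)(V^2)^{k_i}\bigr]\Big/(-2)
\end{equation}
is a product of an even power $V^{2(k-k_i)}$ (itself sos) with the sos polynomial guaranteed above, hence sos; and $W=V^{2k+2}$ is sos as an even power. The uniform exponent $k$ is what lets a single $W$ simultaneously certify all $m$ decrease conditions.

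The main obstacle, and the only subtle point, is the uniformization of the exponent: Scheiderer's theorem supplies a possibly different power $k_i$ for each mode, and I must confirm that inflating each to the common value $k$ preserves the sos property. This works because multiplying an sos form by the even power $V^{2(k-k_i)}$ (a square, hence sos, and sos is closed under products) keeps it sos, so no mode is ``broken'' by raising the degree. A secondary point to verify is that $W$ and all $-\dot W_i$ remain positive definite, which is transparent since $W=V^{2k+2}$ vanishes only where $V$ does (at the origin) and $-\dot W_i$ is a positive multiple of $-V^{2k+1}\dot V_i$, strictly positive off the origin by positive/negative definiteness of $V$ and $\dot V_i$. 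I would close by noting that positive definiteness of $W$ together with each $-\dot W_i$ being positive definite recovers, via the convex-combination argument above, asymptotic stability of the inclusion, completing the equivalence.
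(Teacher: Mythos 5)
Your proposal is correct and follows essentially the same route as the paper: invoke the Mason et al. result to obtain a common homogeneous polynomial Lyapunov function, then apply Scheiderer's Theorem~\ref{thm:claus} once per mode to get exponents $k_i$, uniformize via $k=\max_i k_i$, and set $W=V^{2k+2}$, exactly as in the paper's Proposition~\ref{prop:switch.poly.then.sos.poly} (specialized to $d_i=1$). The only difference is cosmetic: you spell out the easy ``if'' direction via convex combinations, which the paper leaves implicit.
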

To prove this result, we make use of the following theorem of Mason
et al.

\begin{theorem}[Mason et al.,~\cite{switch.common.poly.Lyap}] \label{thm:switch.poly.exists} If the switched linear system
in (\ref{eq:switched.linear.system}) is asymptotically stable
under arbitrary switching, then there exists a common homogeneous
polynomial Lyapunov function $V$ such that
\begin{equation}\nonumber
\begin{array}{rll}
V &>&0 \ \ \forall x\neq 0  \\
-\dot{V}_i(x)=-\langle \nabla V(x), A_ix\rangle &>&0 \ \  \forall
x\neq 0,
\end{array}
\end{equation}
for $i=1,\ldots,m$.
\end{theorem}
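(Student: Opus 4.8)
The plan is to combine a smooth converse Lyapunov theorem for the associated differential inclusion with a polynomial approximation argument on the unit sphere, using homogeneity to pass between local and global statements. First I would invoke the equivalence, recalled just before the statement, between asymptotic stability under arbitrary switching of \eqref{eq:switched.linear.system} and asymptotic stability of the inclusion $\dot x\in co\{A_i\}x$. Since the right-hand side of this inclusion is positively homogeneous of degree one, a converse Lyapunov theorem for homogeneous differential inclusions---in the spirit of the homogeneous converse result cited in the excerpt (\cite{HomogHomog}) for a single vector field and of the smooth converse theorem of Clarke, Ledyaev and Stern for inclusions---produces a common Lyapunov function $v$ that is positive definite, homogeneous of degree two, and smooth on $\mathbb{R}^n\setminus\{0\}$, with $\dot v_i(x)=\langle\nabla v(x),A_ix\rangle$ negative definite for every $i$. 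The reason for insisting on homogeneity and smoothness is that all subsequent estimates can then be carried out on the compact unit sphere.

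Next I would record the strict inequalities quantitatively on $S^{n-1}$. By continuity, compactness, and finiteness of $m$, there are constants $0<a\le b$ and $c>0$ with $a\le v(x)\le b$ and $\langle\nabla v(x),A_ix\rangle\le -c$ for all $x\in S^{n-1}$ and all $i$. Rather than approximating $v$ directly---degree-two homogeneous polynomials are merely quadratic forms and cannot in general approximate $v$, which is exactly why higher-degree Lyapunov functions are needed---I would fix a large even degree and work with the template $u:=v^{k}$. This is again smooth away from the origin, homogeneous of degree $2k$, positive definite, and satisfies $\dot u_i=k\,v^{k-1}\dot v_i\le -k\,a^{k-1}c$ on $S^{n-1}$, recording an explicit decrease margin.

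The heart of the argument is to approximate $u$ by a homogeneous polynomial $W$ of the \emph{same} degree $2k$, closely enough in the $C^1$ norm on $S^{n-1}$ that the strict inequalities transfer. Matching the degree is the crucial device: since $W$ and $u$ are both homogeneous of degree $2k$, Euler's identity gives $\langle\nabla W,x\rangle=2k\,W$ and $\langle\nabla u,x\rangle=2k\,u$ on the sphere, so the radial component of $\nabla W$ is controlled by the value $W$ alone and automatically matches that of $\nabla u$ whenever $W\approx u$; only the tangential part of the gradient then has to be matched, which is a genuine $C^1$ approximation problem on the compact manifold $S^{n-1}$ solvable by Weierstrass or truncated spherical-harmonic (Fourier--Laplace) expansions, using that $u$ is an even function so that homogenization to the single even degree $2k$ stays polynomial. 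Once $\|W-u\|_{C^1(S^{n-1})}$ is small relative to the margins above, the polynomials $W$ and $-\dot W_i=-\langle\nabla W,A_ix\rangle$ are positive, respectively negative, on $S^{n-1}$, hence positive and negative definite everywhere by homogeneity, yielding the desired common homogeneous polynomial Lyapunov function.

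I expect the main obstacle to be precisely this last quantitative balance: both the decrease margin of $u=v^k$ and the $C^1$ approximation error achievable by degree-$2k$ homogeneous polynomials depend on $k$, and one must guarantee that the error is dominated by the margin even though the target $v^k$ becomes more sharply varying as $k$ grows while the polynomial degree budget grows only linearly. I would try to control this by arranging $v$ to be real analytic away from the origin (which the converse theorems can be made to provide, or obtained by an extra homogeneity-preserving smoothing step), so that the spherical-harmonic expansion of $u|_{S^{n-1}}$ converges geometrically in $C^1$; pinning down that a truncation within the allowed degree beats the margin for a suitable $k$ is the delicate technical point of the proof.
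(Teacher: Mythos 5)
The paper offers no proof of this theorem: it is imported verbatim from Mason et al.~\cite{switch.common.poly.Lyap}, so your proposal can only be measured against the argument in that reference. There, the starting point is not a smooth converse Lyapunov theorem but the Molchanov--Pyatnitskiy converse theorem, which yields a common Lyapunov function that is \emph{convex} and positively homogeneous; this convex function is then approximated by homogeneous polynomials of degree $2m$ built from even powers of linear forms (essentially $x\mapsto\int\langle x,y\rangle^{2m}\,d\mu(y)$). Convexity is the load-bearing element: for a convex homogeneous function the decrease along $A_ix$ can be certified through one-sided difference quotients, so uniform ($C^0$) closeness on the sphere suffices to transfer the strict inequalities, and the fixed degree-$2$ target never has to be raised to a moving power. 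Your architecture (stability of the inclusion $\dot x\in co\{A_i\}x$, a homogeneous converse theorem in the spirit of~\cite{HomogHomog}, approximation on $S^{n-1}$, homogeneity to globalize) is in the same spirit, but by trading convexity for smoothness you force a $C^1$ approximation of the moving target $v^k$, and that is exactly where the proof breaks.

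The gap you flag at the end is not merely ``delicate''---as written, the estimates lose. With $a=\min_{S^{n-1}}v$, $b=\max_{S^{n-1}}v$, and decrease margin $c$, the margin of $u=v^k$ is of order $k\,a^{k-1}c$, while $\sup_{S^{n-1}}|u|=b^k$, so the \emph{relative} $C^1$ accuracy you need decays like $(a/b)^{k}$. On the other side, even granting real analyticity of $v$ (which Clarke--Ledyaev--Stern-type converse theorems do not provide, though a heat-kernel smoothing on the sphere could plausibly repair that), the degree-$2k$ spherical-harmonic truncation of $v^k$ has error of order $B^k e^{-2\delta k}$, where $\delta$ is the width of a complex strip of analyticity and $B=\sup|v|$ on that strip satisfies $B\ge b\ge a$. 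Beating the margin requires $Be^{-2\delta}<a$, i.e., $e^{2\delta}>B/a$, and nothing guarantees this; moreover rescaling $v\mapsto\lambda v$ multiplies $a$, $b$, and $B$ alike, so no normalization can fix the ratio. Hence the linear-in-$k$ degree budget does not obviously ever dominate the exponentially shrinking relative margin, and the proposal stalls at its crux---a crux the convexity-based route of Mason et al.\ avoids entirely, since there the polynomial degree is a free parameter decoupled from any power of the Lyapunov function and only $C^0$ closeness is needed.
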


The next proposition is an extension of
Theorem~\ref{thm:poly.lyap.then.sos.lyap} to switched systems (not
necessarily linear).

\begin{proposition}\label{prop:switch.poly.then.sos.poly}
Consider an arbitrary switched dynamical system
\begin{equation}\nonumber 
\dot{x}=f_i(x), \quad i\in\{1,\ldots,m\},
\end{equation}
where $f_i(x)$ is a homogeneous polynomial vector field of degree
$d_i$ (the degrees of the different vector fields can be
different). Suppose there exists a common positive definite
homogeneous polynomial Lyapunov function $V$ such that
$$-\dot{V}_i(x)=-\langle \nabla V(x), f_i(x)\rangle$$
is positive definite for all $i\in\{1,\ldots, m\}$. Then there
exists a common homogeneous polynomial Lyapunov function $W$ such
that $W$ is sos (and positive definite) and the polynomials $$-\dot{W}_i=-\langle \nabla
W(x), f_i(x)\rangle,$$ for all $i\in\{1,\ldots, m\}$, are also
sos (and positive definite).
\end{proposition}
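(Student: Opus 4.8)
The plan is to mimic the proof of Theorem~\ref{thm:poly.lyap.then.sos.lyap}, adding the single ingredient needed to handle finitely many vector fields simultaneously: a uniform choice of the Scheiderer exponent. First I would note that for each fixed $i$ the polynomial $V^2$ is positive definite and homogeneous, and so is $-2V\dot{V}_i = 2V\cdot(-\dot{V}_i)$, since it is a product of two positive definite homogeneous polynomials (positivity of $V$ and of $-\dot{V}_i$ away from the origin gives positivity of the product there, while homogeneity is preserved under products and under taking the gradient of $V$). Applying Theorem~\ref{thm:claus} to the pair $(-2V\dot{V}_i,\, V^2)$ then yields, for each $i$, an integer $k_i$ such that $(-2V\dot{V}_i)(V^2)^{k_i}$ is sos.

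The key step, and the only place the argument departs from the single-system case, is to pass from the $m$ possibly distinct exponents $k_i$ to one exponent that works for all of them at once. Here I would invoke the elementary fact that the sos cone is closed under multiplication and that every even power $V^{2j}=(V^j)^2$ is itself sos: hence if $(-2V\dot{V}_i)(V^2)^{k_i}$ is sos, then for any $k\geq k_i$ the polynomial $(-2V\dot{V}_i)(V^2)^{k} = \big[(-2V\dot{V}_i)(V^2)^{k_i}\big]\cdot\big(V^{\,2(k-k_i)}\big)$ is a product of two sos polynomials and therefore sos as well. Taking $K := \max_{i} k_i$, I would conclude that $(-2V\dot{V}_i)(V^2)^{K}$ is sos simultaneously for every $i\in\{1,\dots,m\}$.

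With a common exponent in hand, I would set $W := V^{2K+2}$. Then $W=(V^{K+1})^2$ is manifestly sos, homogeneous, and positive definite, and for each $i$ one has
$$-\dot{W}_i = -(2K+2)\,V^{2K+1}\dot{V}_i = (K+1)\,(-2V\dot{V}_i)(V^2)^{K},$$
which is sos by the previous paragraph; positive definiteness of $-\dot{W}_i$ follows because it is a positive multiple of the positive definite polynomial $(-2V\dot{V}_i)(V^2)^{K}$. Since $W$ is then a common homogeneous polynomial Lyapunov function carrying all the desired sos certificates, this closes the argument.

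I do not anticipate a genuine obstacle, since the switched case reduces cleanly to the single-system theorem; the one point requiring care is exactly the uniformization of the exponent, which is why finiteness of the index set $\{1,\dots,m\}$ is essential (for an infinite family of modes a common $K$ would not be available for free, and that is where one would have to work harder). I would also double-check the homogeneity bookkeeping, namely that each $-2V\dot{V}_i$ is homogeneous even though the $f_i$ may carry different degrees $d_i$; this causes no difficulty because each $-\dot{V}_i$ is separately homogeneous, so every pair fed into Theorem~\ref{thm:claus} consists of bona fide forms.
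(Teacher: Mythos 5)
Your proposal is correct and follows essentially the same route as the paper's own proof: apply Theorem~\ref{thm:claus} to each pair $(V^2,\,-2V\dot{V}_i)$ to get exponents $k_i$, uniformize via $k=\max_i k_i$ using the fact that $V^{2(k-k_i)}$ is sos and products of sos polynomials are sos, and take $W=V^{2k+2}$. The only difference is cosmetic bookkeeping (you first upgrade each certificate to the common exponent and then define $W$, whereas the paper folds the factorization $(-2V\dot{V}_i)V^{2k_i}\cdot V^{2(k-k_i)}$ directly into the expression for $-\dot{W}_i$), so there is nothing substantive to distinguish the two arguments.
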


\begin{proof}
Observe that for each $i$, the polynomials $V^2$ and
$-2V\dot{V}_i$ are both positive definite and homogeneous.
Applying Theorem~\ref{thm:claus} $m$ times to these pairs of
polynomials, we conclude that there exist positive integers $k_i$
such that
\begin{equation}\label{eq:-2VVdotV^2V^k_i.sos}
(-2V\dot{V}_i)(V^2)^{k_i} \  \mbox{is sos,}
\end{equation}
for $i=1,\ldots,m$. Let $$k=\max\{k_1,\ldots,k_m\},$$ and let
$W=V^{2k+2}.$ Then, $W$ is clearly sos. Moreover, for each $i$,
the polynomial
\begin{equation}\nonumber
\begin{array}{rll}
-\dot{W}_i &=&-(2k+2)V^{2k+1}\dot{V}_i \\
\ &=&-(k+1)2V\dot{V}_iV^{2k_i}V^{2(k-k_i)}
\end{array}
\end{equation}
is sos since $(-2V\dot{V}_i)(V^{2k_i})$ is sos by
(\ref{eq:-2VVdotV^2V^k_i.sos}), $V^{2(k-k_i)}$ is sos as an even
power, and products of sos polynomials are sos.
\end{proof}

The proof of Theorem~\ref{thm:converse.sos.switched.sys} now
simply follows from Theorem~\ref{thm:switch.poly.exists} and
Proposition~\ref{prop:switch.poly.then.sos.poly} in the special
case where $d_i=1$ for all $i$.





\subsection{The ``$V $sos'' requirement for switched linear systems}\label{subsec:switched.V.always.sos}

Our next two propositions show that for switched linear systems, both in discrete time and in continuous time, the sos condition on the Lyapunov function
itself is never conservative, in the sense that if one of the
``decrease inequalities'' has an sos certificate, then the Lyapunov function is
automatically sos. These propositions are really statements about
linear systems, so we will present them as such. However, since
stable linear systems always admit (sos) quadratic Lyapunov functions,
the propositions are only interesting in the context where a
common polynomial Lyapunov function for a switched linear system
is seeked.

\begin{proposition}\label{prop:switch.DT.V.automa.sos}
Consider the linear dynamical system $x_{k+1}=Ax_k$ in discrete
time. Suppose there exists a positive definite polynomial Lyapunov
function $V$ such that $V(x)-V(Ax)$ is positive definite and sos.
Then, $V$ is sos.
\end{proposition}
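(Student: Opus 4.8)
The plan is to exploit the linearity of the system through iteration, combined with the closedness of the cone of sum of squares polynomials of bounded degree. The starting observation is that since $V$ is positive definite with $V(x)-V(Ax)$ positive definite, the origin is asymptotically stable, and because the system is linear this is equivalent to $A$ being Schur stable; in particular $A^N\to 0$ as $N\to\infty$. I would then set $g(x):=V(x)-V(Ax)$, which is sos by hypothesis, and use $g$ as the building block.

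The key algebraic step is a telescoping identity. For every positive integer $N$,
\[
V(x)-V(A^Nx)=\sum_{j=0}^{N-1}\bigl(V(A^jx)-V(A^{j+1}x)\bigr)=\sum_{j=0}^{N-1} g(A^jx).
\]
Since the sos property is preserved under a linear change of variables — if $g=\sum_i q_i^2$ then $g(A^jx)=\sum_i\bigl(q_i(A^jx)\bigr)^2$ is again a sum of squares — each summand $g(A^jx)$ is sos, and hence so is the finite sum. Thus $V(x)-V(A^Nx)$ is sos for every $N$.

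The remaining step is to pass to the limit. All the polynomials $V(x)-V(A^Nx)$ have degree at most $d:=\deg V$, so they lie in the finite-dimensional space of polynomials of degree $\le d$. As $A^N\to 0$, the coefficients of $V(A^Nx)$ (which are polynomial functions of the entries of $A^N$) converge to those of the constant polynomial $V(0)$, so $V(x)-V(A^Nx)\to V(x)-V(0)$ coefficientwise. Because the cone of sos polynomials of degree at most $d$ is closed, the limit $V(x)-V(0)$ is sos. Finally, positive definiteness of $V$ gives $V(0)=\lim_{x\to 0}V(x)\ge 0$, so the constant $V(0)$ is itself sos, and therefore $V(x)=\bigl(V(x)-V(0)\bigr)+V(0)$ is a sum of two sos polynomials, hence sos.

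I expect the main obstacle to be the limiting argument rather than the algebra: one must ensure the approximants stay in a fixed finite-dimensional space (guaranteed by the uniform degree bound $d$) so that closedness of the sos cone can legitimately be invoked, and one must justify $A^N\to 0$, which is precisely where the Lyapunov/stability hypothesis together with linearity genuinely enters. By contrast, the telescoping identity and the invariance of sos under linear substitution are routine. Handling the possibility $V(0)>0$, which is admissible under the paper's definition of positive definiteness, is a minor point cleanly resolved by the final additive decomposition.
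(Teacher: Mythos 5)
Your proof is correct and follows essentially the same route as the paper's: the telescoping identity $V(x)-V(A^Nx)=\sum_{j=0}^{N-1}g(A^jx)$ is just the iterative ``substitute $x\mapsto Ax$ and add'' step the paper uses, followed by the same appeal to $A^N\to 0$ and closedness of the bounded-degree sos cone. Your explicit handling of the constant $V(0)\ge 0$ at the end is a minor point the paper glosses over, but otherwise the arguments coincide.
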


\begin{proof}
Consider the polynomial $V(x)-V(Ax)$ that is sos by assumption. If
we replace $x$ by $Ax$ in this polynomial, we conclude that the
polynomial $V(Ax)-V(A^2 x)$ is also sos. Hence, by adding these
two sos polynomials, we get that $V(x)-V(A^2x)$ is sos. This
procedure can be repeated to infer that for any integer
$k\geq 1$, the polynomial
\begin{equation}\label{eq:V-V(A^k)}
V(x)-V(A^kx)
\end{equation}
is sos. Since by assumption $V$ and $V(x)-V(Ax)$ are positive
definite, the linear system must be GAS, and hence $A^k$ converges
to the zero matrix as $k\rightarrow\infty$. Observe that for all
$k$, the polynomials in (\ref{eq:V-V(A^k)}) have degree equal to
the degree of $V$, and that the coefficients of $V(x)-V(A^kx)$
converge to the coefficients of $V-V(0)$ as $k\rightarrow\infty$. Since
for a fixed degree and dimension the cone of sos polynomials is
closed~\cite{RobinsonSOS}, it follows that $V-V(0)$ is sos. Hence, $V$ is sos.
\end{proof}

Similarly, in continuous time, we have the following:
\begin{proposition}\label{prop:switch.CT.V.automa.sos}
Consider the linear dynamical system $\dot{x}=Ax$ in continuous
time. Suppose there exists a positive definite polynomial Lyapunov
function $V$ such that $-\dot{V}=-\langle \nabla V(x), Ax \rangle$
is positive definite and sos. Then, $V$ is sos.
\end{proposition}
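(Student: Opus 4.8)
The plan is to mirror the telescoping argument of Proposition~\ref{prop:switch.DT.V.automa.sos}, replacing the discrete powers $A^k$ by the flow $e^{At}$ and the summation by integration. The starting point is the identity obtained by differentiating $V$ along the trajectory $s\mapsto e^{As}x$. Since $\frac{d}{ds}V(e^{As}x)=\langle\nabla V(e^{As}x),Ae^{As}x\rangle=\dot V(e^{As}x)$, integrating from $0$ to $t$ and rearranging yields
\begin{equation}\nonumber
V(x)-V(e^{At}x)=\int_0^t\bigl(-\dot V\bigr)(e^{As}x)\,ds.
\end{equation}
First I would observe that the integrand on the right is, for each fixed $s$, a sum of squares in $x$: writing $-\dot V=\sum_j q_j^2$ by hypothesis, we have $(-\dot V)(e^{As}x)=\sum_j q_j(e^{As}x)^2$, and each $q_j(e^{As}x)$ is a genuine polynomial in $x$. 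Moreover, because $e^{As}$ is invertible, this substitution is a change of variables that preserves degree, so every integrand polynomial has degree exactly $\deg(-\dot V)$, uniformly in $s$.

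The crux of the argument is then to show that the left-hand side $V(x)-V(e^{At}x)$ is itself sos for each fixed $t$, i.e.\ that an integral of a family of sos polynomials of uniformly bounded degree is again sos. I would establish this by approximating the integral with Riemann sums $\sum_i(-\dot V)(e^{As_i}x)\,\Delta s_i$. Each such sum is a nonnegative combination of sos polynomials and is therefore sos, and as the mesh tends to zero the Riemann sums converge, coefficient by coefficient in the finite-dimensional space of polynomials of degree at most $\deg(-\dot V)$, to the integral. Invoking closedness of the sos cone for fixed degree and dimension~\cite{RobinsonSOS}, the limit $V(x)-V(e^{At}x)$ is sos.

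Finally I would let $t\to\infty$. Positive definiteness of $V$ and of $-\dot V$ forces the system to be GAS, so $e^{At}\to 0$; consequently the nonconstant coefficients of the polynomial $x\mapsto V(e^{At}x)$ all tend to zero while its constant term remains $V(0)$, so $V(x)-V(e^{At}x)$ converges coefficient-wise to $V(x)-V(0)$. Since every member of this convergent family is sos and of degree at most $\deg V$, closedness of the sos cone again gives that $V-V(0)$ is sos; as $V(0)\ge 0$ (indeed $V(0)=0$ for a positive definite Lyapunov function), adding back the nonnegative constant $V(0)$ preserves the sos property, and hence $V$ is sos. The step I expect to require the most care is the interchange of ``integral'' and ``sos'', namely the Riemann-sum together with closed-cone justification that integrating sos polynomials of bounded degree produces an sos polynomial; everything else is a direct transcription of the discrete-time proof.
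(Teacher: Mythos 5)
Your proposal is correct and takes essentially the same route as the paper: the paper writes $V(x)=\int_0^\infty -\dot{V}(e^{A\tau}x)\,d\tau+V(0)$ by integrating along trajectories and using GAS, and then invokes the fact that converging integrals of sos polynomials are sos. Your Riemann-sum argument combined with closedness of the sos cone is exactly the justification of that fact (the paper leaves it implicit), and your two-step limit (fixed $t$, then $t\to\infty$) is just a more detailed rendering of the same argument.
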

\begin{proof}
The value of the polynomial $V$ along the trajectories of the
dynamical system satisfies the relation
$$V(x(t))=V(x(0))+\int_o^t \dot{V}(x(\tau)) d_\tau.$$ Since the
assumptions imply that the system is GAS, $x(t)\rightarrow 0$
as $t$ goes to infinity. By evaluating the
above equation at $t=\infty$, rearranging terms, and substituting
$e^{A\tau}x$ for the solution of the linear system at time $\tau$
starting at initial condition $x$, we obtain
$$V(x)=\int_0^\infty -\dot{V}(e^{A\tau}x) d_\tau+V(0).$$ By assumption, $-\dot{V}$ is sos and therefore for any value of $\tau$, the integrand
$-\dot{V}(e^{A\tau}x)$ is an sos polynomial. Since converging
integrals of sos polynomials are sos, it follows that $V$ is sos.
\end{proof}


One may wonder if a similar statement holds for nonlinear vector fields? The answer is negative.


\begin{lemma}\label{lem:V.autom.sos.not.true.NL}
There exist a polynomial vector field $\dot{x}=f(x)$ and a polynomial Lyapunov function $V,$ such that $V$ and $-\dot{V}$ are positive definite, $-\dot{V}$ is sos, but $V$ is not sos.
\end{lemma}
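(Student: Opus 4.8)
The plan is to exploit the freedom we have in choosing the vector field $f$: given any candidate Lyapunov function $V$, the cleanest way to force $-\dot{V}$ to be a sum of squares is to take the \emph{gradient flow} $f=-\nabla V$. Then $\dot{V}=\langle\nabla V,-\nabla V\rangle=-\|\nabla V\|^2$, so that $-\dot{V}=\sum_i(\partial V/\partial x_i)^2$ is manifestly a sum of squares, being a sum of squares of polynomials. Thus the whole problem collapses to producing a single polynomial $V$ that is positive definite but \emph{not} a sum of squares, and for which $\|\nabla V\|^2$ is positive definite; the vector field $\dot{x}=-\nabla V(x)$ is then handed to us for free, and $V$ is automatically a Lyapunov function for it.

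To control both positive-definiteness requirements at once, I would take $V$ to be a form (homogeneous polynomial). The payoff is Euler's identity $\langle x,\nabla V\rangle=(\deg V)\,V$: if $V$ is a positive definite form and $\nabla V(x_0)=0$ for some $x_0\neq 0$, then $(\deg V)\,V(x_0)=\langle x_0,\nabla V(x_0)\rangle=0$ forces $V(x_0)=0$, a contradiction. Hence for a positive definite form the gradient vanishes only at the origin, so $-\dot{V}=\|\nabla V\|^2$ is positive definite without any further work. A positive definite form is radially unbounded, so $V$ is a legitimate radially unbounded Lyapunov function establishing GAS, while $-\dot{V}$ is positive definite and sos, exactly as the lemma demands.

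It then remains to exhibit a positive definite ternary form that is not a sum of squares. I would start from the Motzkin form $M(x_1,x_2,x_3)=x_1^4x_2^2+x_1^2x_2^4+x_3^6-3x_1^2x_2^2x_3^2$, which is nonnegative (by the arithmetic--geometric mean inequality) but not a sum of squares, and perturb it to $V=M+\epsilon(x_1^2+x_2^2+x_3^2)^3$ for a small $\epsilon>0$. On the unit sphere one has $M\geq 0$ and $(x_1^2+x_2^2+x_3^2)^3=1$, so $V\geq\epsilon>0$ there; by homogeneity $V$ is positive definite for every $\epsilon>0$.

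The one point that requires genuine care---and the main obstacle---is to guarantee that this perturbation does not accidentally push $V$ into the sos cone. Here I would invoke the fact (already used elsewhere in the paper) that for fixed degree and number of variables the cone of sos forms is closed. Since $M$ lies outside this closed cone, it has strictly positive distance from it, so for all sufficiently small $\epsilon$ the form $M+\epsilon(x_1^2+x_2^2+x_3^2)^3$ still lies outside the cone and is therefore not a sum of squares. Fixing any such $\epsilon$ yields a positive definite, non-sos form $V$; paired with the vector field $\dot{x}=-\nabla V(x)$ this produces precisely the required counterexample, in which $V$ and $-\dot{V}$ are positive definite, $-\dot{V}$ is sos, yet $V$ is not.
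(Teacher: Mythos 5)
Your proposal is correct and takes essentially the same approach as the paper: the paper also uses the gradient flow $\dot{x}=-\nabla V(x)$ so that $-\dot{V}=\|\nabla V(x)\|^2$ is automatically sos, invokes Euler's identity to get positive definiteness of $-\dot{V}$, and uses precisely your perturbed Motzkin construction, with the explicit choice $V=x_1^4x_2^2+x_1^2x_2^4-3x_1^2x_2^2x_3^2+x_3^6+\frac{1}{250}(x_1^2+x_2^2+x_3^2)^3$. The only difference is that where you justify non-sos-ness via closedness of the sos cone for sufficiently small $\epsilon$, the paper simply asserts it for the concrete value $\epsilon=1/250$.
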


\begin{proof}
Consider any positive form $V$ that is not a
sum of squares. (An example is $x_1^4x_2^2+x_1^2x_2^4-3x_1^2x_2^2x_3^2+x_3^6+\frac{1}{250}(x_1^2+x_2^2+x_3^2)^3$.) Define a dynamical system by $$\dot{x}=-\nabla
V(x).$$ In this case, both $V$ and $-\dot{V}=||\nabla V(x)||^2$ are
positive definite and $-\dot{V}$ is sos, though $V$ is not sos. To see that $-\dot{V}$ is positive definite, note that a homogeneous function $V$ of degree $d$ satisfies the Euler identity: $V(x)=\frac{1}{d}x^T\nabla V(x)$. If we had $-\dot{V}(x)=0$ for some $x\neq 0$, then we would have $\nabla V(x)=0$ and hence also $V(x)=0$, which is a contradiction.
\end{proof}

\section{Working with the top homogeneous component of $V$}\label{sec.h.o.t}

We show in this final section that for global stability analysis with sos techniques, the requirement of the polynomial Lyapunov function being sos can be replaced with the requirement of its top homogeneous component being sos. We also show that doing so has a number of advantages.
%
The point of departure is the following proposition, which states that in presence of radial unboundedness, the positivity requirement of the Lyapunov function is not needed.

\begin{proposition}\label{prop:hot.implies.gas}
Consider the vector field (\ref{eq:CT.dynamics}). If there exists a continuously differentiable, radially unbounded Lyapunov function $V$ that satisfies $\dot{V}(x)<0$, $\forall x\neq 0$, then the origin is globally asymptotically stable.
\end{proposition}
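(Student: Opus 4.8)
The plan is to show that the stated hypotheses already force $V$ to be positive definite up to an additive constant, and then to invoke the classical Lyapunov theorem for global asymptotic stability. The key observation is that radial unboundedness, which guarantees that $V$ attains a global minimum, interacts with the strict decrease condition $\dot{V}(x)<0$ for $x\neq 0$ to pin that minimum precisely at the origin, thereby recovering positivity of $V$ that was deliberately not assumed.

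First I would argue that $V$ attains a global minimum at some finite point $x^\star$. Since $V$ is continuous and radially unbounded, it is bounded below and every minimizing sequence is bounded (otherwise $V$ would blow up along it); passing to a convergent subsequence and using continuity yields a minimizer $x^\star$. Because $V$ is continuously differentiable and $x^\star$ is an unconstrained minimizer, the first-order optimality condition gives $\nabla V(x^\star)=0$, and consequently $\dot{V}(x^\star)=\langle \nabla V(x^\star), f(x^\star)\rangle = 0$.

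Next I would invoke the strict decrease hypothesis: since $\dot{V}(x)<0$ for every $x\neq 0$, the vanishing of $\dot{V}$ at $x^\star$ forces $x^\star=0$. The same reasoning applies to \emph{any} global minimizer (each has vanishing gradient, hence vanishing $\dot{V}$), so the origin is in fact the unique global minimizer of $V$. Replacing $V$ by $V-V(0)$, which leaves $\dot{V}$ unchanged since the added constant has zero gradient, I then obtain a continuously differentiable, radially unbounded function that is now genuinely positive definite and whose derivative along trajectories is negative definite. The classical Lyapunov theorem for global asymptotic stability (see, e.g., \cite[Chap. 4]{Khalil:3rd.Ed}) applies verbatim; in particular, radial unboundedness together with $\dot{V}\leq 0$ makes the sublevel sets of $V$ bounded and invariant, so forward trajectories are well-defined and converge to the origin.

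The main obstacle, and really the only substantive point, is the middle step showing that radial unboundedness plus $\dot{V}<0$ off the origin recovers positive definiteness for free. Once the minimizer is located at the origin and shown to be unique, the remainder is a routine appeal to the standard theorem. The subtlety to be careful about is that the global minimum must genuinely be attained at a \emph{finite} point, which is exactly what radial unboundedness buys us, so that the first-order condition $\nabla V(x^\star)=0$ is available; without radial unboundedness the infimum could fail to be attained and this argument would break down.
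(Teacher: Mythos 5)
Your proof is correct and follows essentially the same route as the paper's: use radial unboundedness to guarantee attainment of a global minimum, apply the first-order optimality condition $\nabla V(x^\star)=0$ together with $\dot{V}<0$ off the origin to force the minimizer to be the origin, then shift to $V-V(0)$ and invoke the classical Lyapunov theorem. The only (immaterial) difference is that you establish attainment via a bounded minimizing sequence while the paper uses compactness of a sublevel set.
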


\begin{proof}
We first observe that since $V$ is radially unbounded and continuous, it must be lower bounded. In fact, radial unboundedness implies that the set $\{x|\ V(x)\leq V(1,0,\ldots,0)\}$ is compact, and continuity of $V$ implies that the minimum of $V$ on this set, which equals the minimum of $V$ everywhere, is achieved. We further claim that this minimum can only be achieved at the origin. Suppose there was a point $\bar{x}\neq 0$ that was a global minimum for $V$. As a necessary condition of global optimality, we must have $\nabla V(\bar{x})=0$. But this implies that $\dot{V}(\bar{x})=0$, which is a contradiction. Now consider a new Lyapunov function $W$ defined as $W(x)\mathrel{\mathop:=V(x)-V(0)}$. Then $W$ is positive definite, radially unbounded, and has $\dot{W}=\dot{V}<0$, for all $x\neq 0$. Hence, $W$ satisfies all assumptions of Lyapunov's theorem (see, e.g.,~\cite[Chap. 4]{Khalil:3rd.Ed}), therefore implying global asymptotic stability.\footnote{The conditions of Proposition~\ref{prop:hot.implies.gas} do not imply that $V(0)=0$ as is customary for Lyapunov functions. However, what we really need is for $V$ to attain its global minimum at the origin, which is the case here.}
\end{proof}

A sufficient condition for a polynomial $p$ to be radially unbounded is for its \emph{top homogeneous component}, denoted by $t.h.c.(p)$, to form a positive definite polynomial. This condition is almost necessary: radial unboundedness of $p$ implies that $t.h.c.(p)$ needs to be positive semidefinite. Since we seek for radially unbounded Lyapunov functions anyway, this suggests that in our SDP search for Lyapunov functions, we can replace the conditions ``$V$ sos and $-\dot{V}$ sos'', with ``$t.h.c.(V)$ sos and $-\dot{V}$ sos''. The following lemma tells us that this can only help us in terms of conservatism.


\begin{lemma}\label{lem:hot.is.no.more.conservative}
Consider the vector field (\ref{eq:CT.dynamics}) and suppose there exists a polynomial function $V\mathrel{\mathop:}=V(x_1,\ldots,x_n)$ that makes both $t.h.c.(V)$ and $-\dot{V}$ sos (and positive definite), then the origin is GAS. Moreover the condition $t.h.c.(V)$ sos is never more conservative (and can potentially be less conservative) than the condition $t.h.c.(V)$ sos.
\end{lemma}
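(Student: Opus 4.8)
The plan is to split the statement into its two assertions, reading the second ``$t.h.c.(V)$ sos'' as ``$V$ sos'' (evidently the intended comparison, given the paragraph preceding the lemma). The GAS assertion I would obtain directly from Proposition~\ref{prop:hot.implies.gas}. The hypotheses supply $-\dot{V}$ positive definite, hence $\dot{V}(x)<0$ for all $x\neq 0$, and $t.h.c.(V)$ positive definite. The one point to verify is that positive definiteness of $h:=t.h.c.(V)$ forces $V$ to be radially unbounded, which is exactly the sufficient condition recorded just before the lemma: if $h$ is a positive definite form of degree $d$, compactness of the unit sphere gives a constant $c>0$ with $h(x)\ge c\|x\|^d$, and the remaining terms of $V$, being of degree $<d$, are dominated as $\|x\|\to\infty$, so $V(x)\to\infty$. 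Since $V$ is a polynomial (hence $C^1$) and radially unbounded with $\dot{V}<0$ off the origin, Proposition~\ref{prop:hot.implies.gas} yields GAS.

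For ``never more conservative'' the crux is the elementary fact that sos-ness descends to the top homogeneous component: if $V$ is sos then $t.h.c.(V)$ is sos. I would prove this by writing $V=\sum_j q_j^2$ with $\deg V=2e$, observing that each $q_j$ then has degree at most $e$, and letting $\hat{q}_j$ be the degree-$e$ homogeneous part of $q_j$. The degree-$2e$ part of $q_j^2$ is precisely $\hat{q}_j^2$, so the top component of $V$ equals $\sum_j \hat{q}_j^2$, which is sos (and this sum is nonzero exactly because $V$ has degree $2e$). Consequently any $V$ feasible for ``$V$ sos and $-\dot{V}$ sos'' is also feasible for ``$t.h.c.(V)$ sos and $-\dot{V}$ sos'', so replacing the first condition by the second can never shrink the search.

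To establish ``potentially less conservative'' it suffices to exhibit one polynomial whose top component is sos but which is itself not sos. I would use the dehomogenized Motzkin polynomial $M(x_1,x_2)=x_1^4x_2^2+x_1^2x_2^4-3x_1^2x_2^2+1$, whose top component $x_1^4x_2^2+x_1^2x_2^4=(x_1^2x_2)^2+(x_1x_2^2)^2$ is sos while $M$ is nonnegative but not sos. If one wants the top component positive definite, as the Lyapunov application prefers, I would instead take $V=M(x_1,x_2)+x_3^6+\epsilon(x_1^2+x_2^2+x_3^2)^3$: the top component is positive definite and sos, while restricting to $x_3=0$ and invoking closedness of the sos cone (so that $M+\epsilon(x_1^2+x_2^2)^3$ is non-sos for suitably small $\epsilon>0$) shows $V$ is not sos. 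I expect this last construction to be the only real obstacle; the GAS claim and the ``never more conservative'' direction follow immediately from Proposition~\ref{prop:hot.implies.gas} and the top-component sos fact, whereas here one must simultaneously keep the top component positive definite and keep the whole polynomial out of the sos cone.
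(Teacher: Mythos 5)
Your proposal is correct, and its overall skeleton matches the paper's: GAS via Proposition~\ref{prop:hot.implies.gas}, then the key implication ``$V$ sos $\Rightarrow$ $t.h.c.(V)$ sos'', then the Motzkin polynomial as the counterexample to the converse. The one genuine difference is how that key implication is proved. The paper homogenizes: it forms $V_h(x_1,\ldots,x_n,y)=y^dV(\frac{x}{y})$, invokes the (cited) fact that the homogenization of an sos polynomial is sos, and recovers $t.h.c.(V)=V_h(x_1,\ldots,x_n,0)$ as a restriction of $V_h$. You instead argue directly on a decomposition $V=\sum_j q_j^2$ of degree $2e$: each $q_j$ has degree at most $e$ (else the top parts would have to cancel, impossible for a sum of squares), and the degree-$2e$ component of $V$ is exactly $\sum_j \hat{q}_j^2$ where $\hat{q}_j$ is the degree-$e$ part of $q_j$. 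Your route is more elementary and self-contained, avoiding the external homogenization fact; the paper's route is shorter on the page but outsources the work to a cited result. Two further points in your favor: you spell out why positive definiteness of $t.h.c.(V)$ gives radial unboundedness (the paper relies on the remark preceding the lemma), and you note that the Motzkin example has a top component that is sos but not positive definite, supplying the three-variable perturbation $M(x_1,x_2)+x_3^6+\epsilon(x_1^2+x_2^2+x_3^2)^3$ with a closedness-of-the-sos-cone argument when a positive definite top component is wanted --- a refinement the paper does not need for the statement as written, but which is correct and relevant to the Lyapunov application.
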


\begin{proof}
The claim of global asymptotic stability follows from Proposition~\ref{prop:hot.implies.gas}. To prove the claim about conservatism, we show that if $V$ is sos, then $t.h.c.(V)$ is sos, while the converse is not true (even if $V$ is nonnegative). Let $d$ be the degree of $V$ and consider the standard homogenization $V_h$ of $V$ in $n+1$ variables given by $V_h(x_1,\ldots,x_1,y)=y^dV(\frac{x}{y})$. Since $V$ is sos, $V_h$ must be sos~\cite{Reznick}. But this implies that $t.h.c.(V)$ must be sos since it is a restriction of $V_h$: $t.h.c.(V)=V_h(x_1,\ldots,x_n,0)$. A counterexample to the converse is the Motzkin polynomial $$x_1^4x_2^2+x_1^2x_2^4-3x_1^2x_2^2+1,$$ whose top homogeneous component $x_1^4x_2^2+x_1^2x_2^4$ is sos, but the polynomial itself is not sos even though it is nonnegative~\cite{Reznick}.
\end{proof}

In the particular case of the plane ($n=2$), the constraint ``$t.h.c.(V)$ sos'' in fact introduces no conservatism at all. This is because all nonnegative bivariate homogeneous polynomials (of any degree) are sums of squares~\cite{Reznick}. By contrast, the alternative condition ``$V$ sos'' can be conservative as there are (non-homogeneous) nonnegative bivariate polynomials that are not sums of squares. In addition to these features in terms of conservatism, the requirement ``$t.h.c.(V)$ sos'' is a cheaper constraint to impose than ``$V$ sos''. Indeed, if $V$ is an $n$-variate polynomial of degree $2d$, simple calculation shows that the SDP underlying the former condition has  $\frac{2d(n+2d-1)!}{2d!n!}$ fewer equality constraints and $\frac{d(n+d-1)!((2n+d)(n+d-1)!+d!n!)}{2(d!n!)^2}$ fewer decision variables than the latter.
Let us end with a concrete example.

\begin{example}\label{ex:thc}
Consider the vector field 
\begin{equation}\nonumber
\begin{array}{l}
\dot{x}_1= 0.36x_1+2x_2-0.32x_1^7-0.02x_1x_2^6+8x_2^7+3x_1^2x_2^5\\
\dot{x}_2= -2x_1-0.44x_2-16x_1^7-x_1x_2^6-0.16x_2^7-0.06x_1^2x_2^5.
\end{array}
\end{equation}
We solve an SDP which searches for a polynomial Lyapunov function $V$ of degree $d$ that satisfies ``$t.h.c.(V)$ sos and $-\dot{V}$ sos''. This SDP is infeasible for $d=2,4,6$. Because of Lemma~\ref{lem:hot.is.no.more.conservative}, we know that the SDP with the more standard conditions ``$V$ sos and $-\dot{V}$ sos'' would have been infeasible also. Moreover, since nonnegative bivariate forms are sos, we know that our constraint ``$t.h.c.(V)$ sos'' is lossless (while we wouldn't be able to make such a claim for the constraint ``$V$ sos'').

For $d=8$, our SDP returns the following solution:
\begin{equation}\nonumber
\begin{array}{l}
V(x)= 2.195x_1^2+4.237x_1^8+2.183x_2^2+2.170x_2^8\\ \  +1.055x_1^2x_2^6+0.863x_1x_2-0.286x_1^7x_2+0.037x_1^6x_2^2\\ \  +0.042x_1^5x_2^3-0.011x_1^4x_2^4-0.021x_1^3x_2^5+0.039x_1x_2^7.
\end{array}
\end{equation}
By checking the eigenvalues of the Gram matrices in our SDP, we can see that the top homogeneous component of this polynomial, as well as $-\dot{V},$ are positive definite. Hence, by Lemma~\ref{lem:hot.is.no.more.conservative}, we know that the system is GAS. For this example it happens that the returned solution $V$ is sos although we asked for a weaker and cheaper condition. Indeed, the semidefinite constraint that we imposed to require $t.h.c.(V)$ to be sos has 105 fewer decision variables and 36 fewer equality constraints than the semidefinite constraint needed to impose $V$ sos. 
%
%
%
%
%
\end{example}

In general, by leaving out the constant term in the parametrization of the polynomial $V$, we can make sure that $V(0)=0$ and hence (in view of the proof of Proposition~\ref{prop:hot.implies.gas}) radial unboundedness would automatically imply positive definiteness of our Lyapunov function.

\paragraph{Acknowledgments.} The authors are grateful to Claus Scheiderer for insightful discussions.

\bibliographystyle{abbrv}
\bibliography{pablo_amirali}

\end{document}